\newtheorem{defi}{Definition}[section]
\newtheorem{theo}{Theorem}[section]
\newtheorem{prop}{Proposition}[section]
\newtheorem{lem}{Lemma}[section]
\newtheorem{rk}{Remark}[section]
\numberwithin{equation}{section}
\def\A{{\cal{A}}}
\def\R{{\mathbb{R}}}
\def\N{\mathbb{N}}
\def\calE{{\cal{E}}}
\def\calF{{\cal{F}}}
\def\calL{{\cal{L}}}
\DeclareMathOperator{\dom}{dom}
\newcommand{\Wa}{W^{\alpha/2,2}(\R^d)}
\newcommand{\Wo}{W_0^{\alpha/2,2}(\Omega)}
\newcommand{\Om}{\Omega}
\newcommand{\alp}{\alpha}
\begin{document}
\bibliographystyle{alpha}

\title{ The heat equation for the  Dirichlet fractional Laplacian with Hardy's potentials:
 properties of minimal solutions and blow-up}

\author{\normalsize   Ali BenAmor\footnote{University of Sousse. Sousse, Tunisia. E-mail: ali.benamor@ipeit.rnu.tn}
}

\date{}
\maketitle
\begin{abstract} Local and global properties of minimal solutions for the heat equation generated by the  Dirichlet fractional Laplacian negatively  perturbed by Hardy's potentials on open subsets of $\R^d$ are analyzed. As a byproduct we obtain instantaneous blow-up of nonnegative solutions in the supercritical case.
\end{abstract}
{\bf Key words}: fractional Laplacian, heat kernel, minimal solution, blow-up, Dirichlet form.\\
{\bf MSC2010}: 35K05, 35B09, 35S11.

\section{Introduction}
In this paper, we discuss mainly two questions: 1. Local and global properties of minimal solutions of the heat equation related to the Dirichlet fractional Laplacian on open subsets  negatively perturbed by potentials of the type $\frac{c}{|x|^\alpha},\ c>0$.\\
2. Relying on the results obtained in 1. we shall prove complete instantaneous blow-up of nonnegative solutions for the same  equation  provided $c$ is bigger than some critical value $c^*$.\\
To be more concrete, let $0<\alpha<\min(2,d)$ and $\Om$ be an open subset $\Om\subset\R^d$ containing zero. We designate by  $L_0^\Om:=(-\Delta)^{\frac{\alpha}{2}}|_\Omega$ the fractional Laplacian with zero Dirichlet condition on $\Omega^c$ (as explained in the next section). We consider the associated perturbed heat equation
\begin{eqnarray}
\label{heat1}
\left\{\begin{gathered}
-\frac{\partial u}{\partial t}=L_0^\Om u - \frac{c}{|x|^\alpha}u,
\quad \hbox{in } (0,T)\times\Omega,\\
u(t,\cdot)=0,\ on~~~\Omega^c,\ \forall\,0<t<T\leq\infty\\
u(0,x)= u_{0}(x),~~~{\rm a.e.\ in}\  \Omega,
%~~x\in \Omega  ,
\end{gathered}
\right.
\end{eqnarray}
where $c>0$ and $u_{0}$ is a nonnegative Borel measurable square integrable  function on $\Om$. The meaning of a solution for the equation (\ref{heat1}) will be explained in the next section.\\
Regarding the first addressed  question, in the paper \cite{benamor-kenzizi}, the authors established existence of nonnegative exponentially bounded solutions on bounded Lipschitz domains provided
\begin{eqnarray}
 0<c\leq c^*:=\frac{2^\alpha\Gamma^2(\frac{d+\alpha}{4})}{ \Gamma^2(\frac{d-\alpha}{4})}.
\end{eqnarray}
They also proved that for $c>c^*$ complete instantaneous blowup takes place, provided $\Om$ is a bounded Lipschitz domain.\\
Concerning properties of solutions,  only partial information are available in the literature. Precisely in \cite[Corollary 5.1]{benamor-JPA} the authors proved that for bounded $C^{1,1}$ domains then under some additional condition  one has  the following asymptotic behavior of nonnegative solutions $u(t,x)$ for large time,
\begin{eqnarray}
u(t,x)\sim c_t |x|^{-\beta(c)}|y|^{-\beta(c)}\delta^{\alpha/2}(x)\delta^{\alpha/2}(y),\ a.e.
\label{asymp0}
\end{eqnarray}
where $0<\beta(c)\leq \frac{d-\alpha}{2}$ and $\delta$ is the distance function to the complement of the domain.\\
In case $\Om=\R^d$, owing to recent results  (see \cite{BogdanHardy}) concerning sharp estimates for the heat kernel of the mentioned evolution equation one can derive precise behavior of  nonnegative solutions of the considered equation. Moreover, in \cite[Corollary 4.11]{BogdanHardy} the authors prove blowup of the heat kernel in the supercritical case on $\R^d$, which implies instantaneous blowup of any nonnegative solution on $\R^d$.\\
However, as long as we know, the second question is still open for general open subsets: It is not clear whether for $c>c^*$ and $\Om$ unbounded any nonnegative solution blows up immediately and completely.\\
In these notes we shall, establish sharp local estimates with respect to the spatial variable, up to the boundary, of a special nonnegative solution (the minimal solution) of the considered heat equation on bounded  sets, in the subcritical case. These estimates will  lead to global sharp $L^p$ regularity property of the solution. We also prove complete instantaneous blowup in the supercritical case for arbitrary domains, regardless boundedness and regularity of the domain. Therefore we solve completely and in a unified manner the question of instantaneous blow-up.\\
Our strategy is described as follows: At first stage we show that in the subcritical case the underlying semigroups have heat kernels. Then we shall establish sharp estimates of the heat kernels near zero of the considered semigroups on bounded sets, which in turns will lead to sharp pointwise estimate of the minimal solution near zero of (\ref{heat1}). The latter results are then exploited to prove the above mentioned properties and to enable us to extend the $L^2$-semigroups to  semigroups on some (weighted) $L^p$-spaces, determining therefore the optimal class of initial data. The main ingredients at this stage are, on one side a transformation procedure by  harmonic functions  that will transform the forms related to the considered semigroups into Dirichlet forms together with  the use of  the celebrated improved Hardy--Sobolev inequality to obtain an upper bound for the heat kernel. On the other side a lower bound for the heat kernel will be established, by using Dynkin--Hunt formula together with the sharp estimates from \cite[Theorem 1.1]{BogdanHardy}.\\
Then the precise description of the pointwise behavior of the heat kernel on bounded sets will deserve among others to establish blowup on open sets.\\
The inspiring point for us were the papers \cite{zuazua,baras-goldstein,cabre-martel} where the problem was addressed and solved for
the Dirichlet Laplacian (i.e. $\alpha=2$). We shall record many resemblances between our results and those found in the latter cited papers though the substantial difference between the Laplacian and the fractional Laplacian.
\section{Preparing results}
From now on we fix an open  subset $\Omega\subset\R^d$ containing zero and a real number $\alpha$ such that $0<\alpha<\min(2,d)$.\\
The Lebesgue spaces $L^2(\R^d,dx)$, resp. $L^2(\Omega,dx)$ will be denoted by $L^2$, resp. $L^2(\Omega)$ and their respective norms will be denoted by $\|\cdot\|_{L^2}$, resp. $\|\cdot\|_{L^2(\Om)}$ .
We shall write $\int\cdots$ as a shorthand for $\int_{\R^d}\cdots$.\\
The letters  $C, C',c_t, \kappa_t$ will denote generic nonnegative finite constants
which may vary in value from line to line.\\
Consider the bilinear symmetric form $\calE$ defined in $L^2$ by
\begin{eqnarray}
\calE(f,g)&=&\frac{1}{2}{\A} (d,\alpha)\int \int \frac{(f(x)-f(y))(g(x)-g(y))}
{|x-y|^{d+\alpha}}\,dxdy,\nonumber\\
D(\calE)&=&W^{\alpha/2,2}(\R^d)
:=\{f\in L^2\colon\,\calE[f]:=\calE(f,f)<\infty\},\,
\label{formula1}
\end{eqnarray}
where
\begin{eqnarray}
{\A}{(d,\alpha)}=\frac{\alpha\Gamma(\frac{d+\alpha}{2})}
{2^{1-\alpha}\pi^{d/2}\Gamma(1-\frac{\alpha}{2})},
\label{analfa}
\end{eqnarray}
is a normalizing constant.\\
Using Fourier transform $\hat f(\xi)=(2\pi)^{-d/2}\int e^{-ix\cdot\xi}f(x)\,dx$, a straightforward computation yields the following identity
(see \cite[Lemma 3.1]{frank})
\begin{eqnarray}
\int |\xi|^\alpha|\hat f(\xi)|^2\,d\xi=\calE[f],\ \forall\,f\in W^{\alpha/2,2}(\R^d).
\label{form-fourier}
\end{eqnarray}
It is well known that $\calE$ is a  Dirichlet form, i.e. it is densely defined bilinear symmetric and closed form  moreover it holds,
\begin{eqnarray}
\forall\,f\in\Wa(\R^d)\Rightarrow f_{0,1}:=(0\vee f )\wedge 1\in\Wa\ {\rm and}\ \calE[f_{0,1}]\leq\calE[f],
\end{eqnarray}
Furthermore $\calE$ is regular, i.e. $C_c(\R^d)\cap \Wa$ is dense in both spaces $C_c(\R^d)$ and $\Wa$. For aspects related to Dirichlet forms we refer the reader to \cite{fukushima-book}.\\
The form  $\calE$   is related (via Kato representation theorem) to the selfadjoint
operator commonly named the fractional Laplacian on  $\R^d$, which we shall denote by  $L_0:=(-\Delta)^{\alpha/2}$. We note that the domain of $L_0$ is the fractional Sobolev space  $W^{\alpha,2}(\R^d)$.
For later purposes we recall the following Hardy's inequality
\begin{eqnarray}
\int \frac{f^2(x)}{|x|^\alpha}\,dx\leq \frac{1}{c^*}\calE[f],\ \forall\,f\in W^{\alpha/2,2}(\R^d),
\label{hardy-global}
\end{eqnarray}
with $1/{c^*}$ being  the best constant in the latter inequality.\\
Henceforth we designate by  $L_0^\Om$, the operator which Dirichlet form in $L^2(\overline\Om,dx)$ is given by
\begin{eqnarray*}
D(\calE_\Om)&=&W_0^{\alpha/2,2}(\Om)\colon=\{f\in W^{\alpha/2,2}(\R^d)\colon\, f=0 ~~~q. e.~on~\Om^c\}\nonumber\\
\calE_\Om(f,g)&=&\calE(f,g)\nonumber\\
&=&\frac{1}{2}\A{(d,\alpha)}\int_\Om\int_\Om \frac{(f(x)-f(y))(g(x)-g(y))}{|x-y|^{d+\alpha}}\,dx\,dy
+\int_\Om f(x)g(x)\kappa_\Om(x)\,dx,%\ \forall\,f\in W_0^{\alpha/2}(\Om).
\end{eqnarray*}
where
\begin{eqnarray}
\kappa_\Om(x):=\A(d,\alpha)\int_{\Om^c}\frac{1}{|x-y|^{d+\alpha}}\,dy.
\end{eqnarray}
For every $t\geq 0$ we designate by $e^{-tL_0^\Om}$ the operator semigroup related to $L_0^\Om$. In the case $\Om=\R^d$ we omit the superscript $\Om$ in the notations.\\
It is a known fact (see \cite{bogdan-book}) that $e^{-tL_0^\Om},\ t>0$ has a kernel (the heat kernel) $p_t^{L_0^\Om}(x,y)$ which is symmetric jointly continuous and $p_t^{L_0^\Om}(x,y)>0,\ \forall\,x,y\in\Om$.\\
Let us introduce  the notion of solution for problem (\ref{heat1}).
\begin{defi}
{\rm Let $V\in L^1_{loc}(\Om)$ be nonnegative, $u_0\in L^2(\Om)$ be nonnegative as well and $0<T\leq\infty$.  We say that a Borel measurable function $u:[0,T)\times\R^d\to\R$ is a solution of the heat equation
\begin{eqnarray}
\label{heat2}
\left\{\begin{gathered}
-\frac{\partial u}{\partial t}=L_0^\Om u - Vu,
\quad \hbox{in } (0,T)\times\Omega,\\
u(t,\cdot)=0,\ \ {\rm on}~~~\Omega^c,\ \forall\,0<t<T\\%\leq\infty\\
u(0,\cdot)= u_{0},~~~{\rm on }\,\, \Omega,
%~~x\in \Omega  ,
\end{gathered}
\right.
\end{eqnarray}
if
\begin{enumerate}
\item $u\in\calL_{loc}^2\big([0,T), L_{loc}^2(\Om)\big)$, where $\calL^2$ is the Lebesgue space of square integrable functions.
\item $u\in L^{1}_{loc}\big((0,T)\times \Om,dt\otimes V(x)\,dx\big)$.
\item For every $0\leq t< T$,  $u(t,\cdot)= 0,\ a.e.$ on $\Om^c$.
\item For every $0\leq t< T$ and every Borel function $\phi:[0,T)\times\R^d$ such that $\mathrm{supp}\,\phi\subset [0,T)\times\Om$, $\phi,\ \frac{\partial \phi}{\partial t}\in L^2((0,T)\times\Om)$,  $\phi(t,\cdot)\in D(L_0)$ and
$$\int_0^t\int_\Om |u(s,x)L_0\phi(s,x)|\,ds\,dx<\infty$$
the following identity holds true
\begin{eqnarray}
\int \big((u\phi)(t,x)-u_0(x)\phi(0,x)\big)\,dx &+&\int_{0}^{t}\int
u(s,x)(-\phi_{s}(s,x)+L_0^\Om\phi(s,x))\,dx\,ds\nonumber\\
&=&\int_{0}^{t}\int u(s,x)\phi(s,x)V(x)\,dx\,ds.
\label{variational}
\end{eqnarray}
\end{enumerate}
}
\end{defi}
For every $c>0$ we denote by $V_c$ the Hardy potential
$$
V_c(x)=\frac{c}{|x|^\alpha},\ x\neq 0.
$$
%Hardy
In \cite{benamor-kenzizi} it is proved that for bounded $\Om$, $V=V_c$ and for $0<c\leq c^*$ equation (\ref{heat1}), with potential $V_c$, has a nonnegative solution, whereas for $c>c^*$ and  $\Om$  a bounded Lipschitz domain then no nonnegative solutions occur. It was recently proved in \cite{BogdanHardy} that the same statements hold true for $\Om=\R^d$. In these notes we shall, among others, fill the gap.\\
In the next section we shall be concerned with properties of a special nonnegative solution which is called {\em minimal solution} or {\em semigroup solution} in the subcritical case, i.e. $0<c<c^*$ and in the critical case, i.e. $c=c^*$.
The connotation minimal solution comes from the following observation (proved in \cite{baras-goldstein} for Dirichlet--Laplacian with Hardy potentials, whereas for Dirichlet fractional Laplacian its proved in \cite{benamor-kenzizi} for bounded domains and in Lemma \ref{domination} for general domains, finally it is proved  in \cite{keller-lenz} in a different context): If $u_k$ is the semigroup solution for the heat equation with potential $V_c\wedge k,\ k\in\N$ and if $u$ is any nonnegative solution of (\ref{heat1}) then $u_\infty:=\lim_{k\to\infty}u_k$ is a nonnegative solution of (\ref{heat1}) and $u_\infty\leq u\ a.e.$.\\
We shall name $u_\infty$ the minimal nonnegative solution and shall denote it by $u$.\\
Let $0<c< c^*$. We denote by $\calE_\Om^{V_c}$ the quadratic form defined by
\begin{eqnarray}
D(\calE_\Om^{V_c} )=\Wo,\ \calE_\Om^{V_c}[f] = \calE_\Om[f] - \int_\Om f^2(x)V_c(x)\,dx.
\end{eqnarray}
Whereas for $c=c^*$, we set
\begin{eqnarray}
\dot{\calE_\Om}^{V_{c^*}}\colon\,  D(\dot{\calE_\Om}^{V_{c^*}} )=\Wo,\ \dot{\calE_\Om}^{V_{c^*}}[f] = \calE_\Om[f] - \int_\Om f^2(x)V_{c^*}(x)\,dx.
\end{eqnarray}
In the case $\Om=\R^d$ we shall omit the subscript $\Om$.\\
As the closability of  $\dot{\calE_\Om}^{V_{c^*}}$ in $L^2(\Om)$ is not obvious we shall perform a method that enables us to prove in a unified manner the closedness of $\calE_\Om^{V_c}$ as well as the closability of  $\dot{\calE_\Om}^{V_{c^*}}$ in $L^2(\Om)$.\\
To that end we recall  some known facts concerning  harmonic functions of $L_0 -\frac{c}{|x|^\alpha}$.\\
We know from \cite[Lemma 2.2]{benamor-JPA} that for every $0<c\leq c^*$ there is a unique $\beta=\beta(c)\in(0,\frac{d-\alpha}{2}]$ such that $w_c(x):=|x|^{-\beta(c)},\ x\neq 0$ solves the equation
\begin{eqnarray}
&&(-\Delta)^{\alpha/2}w-c|x|^{-\alpha}w=0\ {\rm in\ the\ sense\ of\ distributions}.
\label{harmonic1}
\end{eqnarray}
That is
\begin{eqnarray}
< \hat w,|\xi|^{\alpha}\hat\varphi>-c<|x|^{-\alpha}w,\varphi>=0\ \forall\,\varphi\in{\cal S}.
\end{eqnarray}
Making use of Riesz potential it is proved in \cite[Lemma 2.2]{benamor-JPA} that equation (\ref{harmonic1}) is equivalent to
\begin{eqnarray}
\int \frac{w_c(y)}{|x-y|^{d-\alpha}}|y|^{-\alpha}\,dy = c w_c(x),\ \forall\,x\neq 0.
\label{Riesz}
\end{eqnarray}
Furthermore for $\beta_*:=\frac{d-\alpha}{2}$, we have $c=c^*$, i.e., $w_{c^*}(x)=|x|^{-\frac{d-\alpha}{2}},\ x\neq 0$.\\
Next we fix definitively $c\in (0,c^*]$ .\\
For $0<c<c^*$ let $Q_\Om^c$ be the $w_c$-transform of $\calE_\Om^{V_c}$, and for $c=c^*$ let $\dot Q_\Om^{c^*}$ be the $w_{c^*}$-transform of $\dot\calE_\Om^{V_c}$ i.e.,  the quadratic forms defined in $L^2(\Om,w_c^2dx)$ and in $L^2(\Om,w_{c^*}^2dx)$ respectively by:
\begin{eqnarray*}
\dom(Q_\Om^c):=\{f\in L^2(\Om,w_c^2dx)\colon\,w_cf\in\Wo\},\ Q_\Om^c[f]=\calE_\Om^{V_c}[w_cf],\ \forall\,f\in\,\dom(Q_\Om^c),
\end{eqnarray*}
whereas
\begin{eqnarray*}
\dom(\dot Q_\Om^{c^*}):=\{f\in L^2(\Om,w_{c^*}^2dx)\colon\,w_{c^*}f\in\Wo\},\ \dot Q_\Om^{c^*}[f]=\dot\calE_\Om^{V_c}[w_{c^*}f],\ \forall\,f\in\,\dom(\dot Q_\Om^{c^*}).
\end{eqnarray*}
In the case $\Om=\R^d$ we shall omit the subscript $\R^d$ in the above notations.
\begin{lem}
\begin{enumerate}
 \item For every  $0<c<c^*$, the form $Q_\Om^c$ is a Dirichlet form in $L^2(\Om,w_{c}^2dx)$  and
\begin{eqnarray}
Q_\Om^c[f]=\frac{\A(d,\alpha)}{2}\int\int \frac{(f(x)-f(y))^2}{|x-y|^{d+\alpha}} w_c(x)w_c(y)\,dxdy,\ \forall\,f\in \dom(Q_\Om^c).
\label{ID1}
\end{eqnarray}
\item  For $c=c^*$ the form $\dot Q_\Om^{c^*}$ is closable in $L^2(\Om,w_{c}^2dx)$ and
\begin{eqnarray}
\dot Q_\Om^{c^*}[f]=\frac{\A(d,\alpha)}{2}\int\int \frac{(f(x)-f(y))^2}{|x-y|^{d+\alpha}} w_{c_*}(x)w_{c_*}(y)\,dxdy,\ \forall\,f\in \dom(\dot Q_\Om^{c^*}).
\label{ID2}
\end{eqnarray}
Let $Q_\Om^{c^*}$ be the closure of $\dot Q_\Om^{c^*}$. Then $Q_\Om^{c^*}$ is a Dirichlet form. It follows in particular, that $\dot\calE_\Om^{V_{c^*}}$ is closable
\item The sets $C_c^\infty(\Om\setminus\{0\})$ and $\dom (Q_\Om^c)\cap C_c(\Om)$ are cores for  $Q_\Om^c$. It follows that $Q_\Om^c$ is regular for every $0<c\leq c^*$.
\end{enumerate}
\label{closability}
\end{lem}
\begin{rk}
\rm{
We shall show in Remark \ref{NotClosed} that $\dot\calE_\Om^{V_{c^*}}$ is in fact, not closed.
}
\end{rk}
\begin{proof}
The proof of formulae (\ref{ID1})-(\ref{ID2}) follows the lines of the proof of \cite[Lemma 3.1]{benamor-JPA}, where bounded $\Om$ is considered, so we omit it.\\
We turn our attention now to prove the rest of the lemma.\\
Let $0<c<c^*$. Utilizing Hardy's inequality we obtain
\begin{eqnarray}
(1-\frac{c}{c^*})\calE_\Om[f]\leq \calE_\Om^{V_c}\leq\calE_\Om[f],\ \forall\,f\in\Wo,
\end{eqnarray}
from which the closedness of $\calE_\Om^{V_c}$ follows, as well as the closedness of $Q_\Om^c$. From the definition of $Q_\Om^c$ and the fact that   $\calE_\Om^{V_c}$ is densely defined we conclude that $Q_\Om^c$ is densely defined as well. On the other, on the light of formula (\ref{ID1})  it is obvious that the normal contraction acts on $\dom(Q_\Om^c)$ and hence $Q_\Om^c$ is a Dirichlet form.\\
For the critical case formula (\ref{ID2}) indicates that $Q_\Om^{c^*}$ is Markovian and closable, by means of Fubini theorem. Thus, according to \cite[Theorem 3.1.1]{fukushima-book} its closure is a Dirichlet form.\\
To prove claim 3,  we recall that $C_c^\infty(\Om)$ and $\Wo\cap C_c(\Om)$ are cores for $\calE_\Om$ and hence they are cores  for $\calE_\Om^{V_c}, \dot\calE_\Om^{V_c}$, since both forms are dominated by $\calE_\Om$. On the other hand the map $f\mapsto w_c^{-1} f$ maps $C_c^\infty(\Om)$ into $C_c^\infty(\Om\setminus\{0\})$ and $\Wo\cap C_c(\Om)$ into $\dom(Q_\Om^c)\cap C_c(\Om),\ \dom(\dot Q_\Om^{c^*})\cap C_c(\Om)$. All these considerations together with fact that  $\dom(\dot Q_\Om^{c_*})$ is a core for $Q_\Om^{c^*}$ lead to assertion 3.
\end{proof}
Henceforth, we denote by $\calE_\Om^{V_{c^*}}$ the closure of $\dot\calE_\Om^{V_{c^*}}$, by $L_{V_c}^\Om$ the selfadjoint operator associated to $\calE_\Om^{V_{c}}$ for every $0<c\leq c^*$ and by $e^{-tL_{V_c}^\Om},\ t\geq 0$ the related semigroups.\\
Similarly, for every $0<c\leq c^*$ we designate by $A_\Om^{w_c}$ the operator associated to $Q_\Om^c$ in the weighted Lebesgue space $L^2(\Om,w_c^2dx)$ and $T_{t,\Om}^{w_c},\ t\geq 0$ its semigroup. Then
\begin{eqnarray}
 A_\Om^{w_c}=w_c^{-1}L_{V_c}^\Om w_c\ {\rm and}\ T_{t,\Om}^{w_c}=w_c^{-1}e^{-tL_{V_c}^\Om}w_c,\ t\geq 0.
\label{TransformedSg}
\end{eqnarray}
The next proposition explains why are minimal solutions also semigroup solutions.
\begin{prop} For every $0<c\leq c^*$, the minimal solution is given by  $u(t):=e^{-tL_{V_c}^\Om}u_0,\ t>0$. Thus for each $t>0$, $u(t)\in D(L_{V_c}^\Om)$ and  $u\in C([0,\infty);L^2(\Om))\cap C^1((0,\infty);L^2(\Om))$.  Furthermore $u$ fulfills Duhamel's formula
\begin{eqnarray}
u(t,x)&=&e^{-tL_0^\Om}u_0(x)+\int_0^t\int_{\Om} p_{t-s}^{L_0^\Om}(x,y)u(s,y)V_c(y)\,dy\,ds,\ \forall\,t>0,\ a.e. x\in\Om.
\label{Duhamel}
\end{eqnarray}
\label{sg-Sol}
\end{prop}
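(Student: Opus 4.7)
My plan is to approximate the singular potential $V_c$ by bounded truncations $V_k:=V_c\wedge k\in L^\infty(\Om)$ and pass to the limit via the monotone convergence theorem for closed quadratic forms. For each $k\in\N$, the perturbed form
\[
\calE_\Om^{V_k}[f]:=\calE_\Om[f]-\int_\Om V_k f^2\,dx,\qquad D(\calE_\Om^{V_k})=\Wo,
\]
is closed (a bounded perturbation of $\calE_\Om$) and bounded below, the latter by Hardy's inequality (\ref{hardy-global}) combined with $V_k\leq V_c$. Denote by $L_k$ the associated self-adjoint operator and set $u_k(t):=e^{-tL_k}u_0$. Standard analytic semigroup theory then gives $u_k\in C([0,\infty),L^2(\Om))\cap C^1((0,\infty),L^2(\Om))$, $u_k(t)\in D(L_k)$ for $t>0$, and $u_k\geq 0$ (first Beurling--Deny criterion applied to $\calE_\Om$ together with Trotter's product formula for the bounded perturbation). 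The classical variation-of-constants formula
\[
u_k(t)=e^{-tL_0^\Om}u_0+\int_0^t e^{-(t-s)L_0^\Om}\,V_k u_k(s)\,ds
\]
then holds since $V_k u_k(s)\in L^2(\Om)$.

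Next I identify the minimal solution $\ui=\lim_k u_k$ with $e^{-tL_{V_c}^\Om}u_0$. As $V_k\nearrow V_c$, the forms $\calE_\Om^{V_k}$ decrease monotonically on $\Wo$, with pointwise limit $\calE_\Om^{V_c}$ in the subcritical case and $\dot\calE_\Om^{V_{c^*}}$ in the critical case. By Lemma~\ref{closability}, in both cases this pointwise limit is closable with closure $\calE_\Om^{V_c}$, so the monotone convergence theorem for decreasing sequences of closed non-negative forms (Simon, or Reed--Simon, vol.\ I) yields strong resolvent convergence $L_k\to L_{V_c}^\Om$ and hence $e^{-tL_k}u_0\to e^{-tL_{V_c}^\Om}u_0$ strongly in $L^2(\Om)$ for every $t\geq 0$. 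Combined with the a.e.\ monotone convergence $u_k\nearrow\ui$ this forces $\ui(t,\cdot)=e^{-tL_{V_c}^\Om}u_0$. The regularity claims of the proposition now follow from the analyticity of the semigroup generated by the non-negative self-adjoint operator $L_{V_c}^\Om$; the four defining conditions of a solution are then easy to verify ($V_c\in L^1_{loc}(\Om)$ since $\alpha<d$, which gives condition~2 via Hardy's inequality applied to $u(t)\in\Wo$, while the variational identity follows by pairing the abstract equation $\partial_t u=-L_0^\Om u+V_c u$ with the test function and integrating by parts in~$t$).

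Duhamel's formula (\ref{Duhamel}) is obtained by passing to the limit in the corresponding identity for $u_k$: all quantities involved are non-negative and monotonically increasing in $k$ (the heat kernel $p_{t-s}^{L_0^\Om}$ is positive, $u_k\nearrow\ui$, $V_k\nearrow V_c$), so the monotone convergence theorem delivers (\ref{Duhamel}), with finiteness of the right-hand side guaranteed by that of the left. The principal obstacle lies in the critical case $c=c^*$: the pointwise monotone limit of the approximating forms on $\Wo$ is the non-closed precursor $\dot\calE_\Om^{V_{c^*}}$ rather than its closure $\calE_\Om^{V_{c^*}}$, whose domain is strictly larger (cf.\ the remark following Lemma~\ref{closability}). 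One must therefore invoke the version of the monotone form-convergence theorem that identifies the limit operator with the generator of the \emph{closure} of the pointwise limit rather than of the limit itself; Lemma~\ref{closability} is precisely what makes this identification legitimate.
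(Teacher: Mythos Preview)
Your argument is correct and follows exactly the paper's route: truncate $V_c$ by $V_c\wedge k$, apply the monotone convergence theorem for decreasing closed forms (the paper invokes Kato, Theorem~3.11) together with the closability supplied by Lemma~\ref{closability} to obtain strong resolvent and hence strong semigroup convergence, and then identify the monotone limit $u_\infty$ with $e^{-tL_{V_c}^\Om}u_0$. Your handling of Duhamel's formula via monotone convergence in the integral, and your explicit flagging of the critical-case subtlety (the pointwise limit on $\Wo$ is the non-closed $\dot\calE_\Om^{V_{c^*}}$, whose closure determines the limiting operator), are in fact more detailed than the paper, which dispatches these points under ``standard theory of semigroups''.
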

\begin{proof} Let $(h_k)_k$ be the sequence of closed  quadratic forms in $L^2(\Om)$ defined by
$$
h_k:=\calE_\Om - V_c\wedge k,
$$
and $(H_k)_k$ be the related selfadjoint operators. Then $(h_k)_k$ is uniformly lower semibounded and $h_k\downarrow \calE_\Om^{V_c}$ in the subcritical case, whereas $h_k\downarrow \dot\calE_\Om^{V_{c^*}}$ in the critical case. As both forms $\calE_\Om^{V_c},\ \dot\calE_\Om^{V_{c^*}}$ are closable, we conclude by \cite[Theorem 3.11]{kato} that $(H_k)$ converges in the strong resolvent sense to $L_{V_c}^\Om$ for every $0<c\leq c^*$. Hence $e^{-tH_k}$ converges strongly to $e^{-tL_{V_c}^\Om}$ and then the monotone
sequence $u_k:=e^{-tH_k}u_0$ converges to $e^{-tL_{V_c}^\Om}u_0$ which is nothing else but the minimal solution.\\
The remaining claims of the proposition follow from the standard theory of semigroups.

\end{proof}
As minimal solutions  are given in term of semigroups we are led to analyze properties of the latter objects to gain information about the formers. Here is a first result in this direction.
\begin{prop} For every $t>0$ the semigroup $e^{-tL_\Om^{V_c}},\ t>0$ has a measurable nonnegative symmetric absolutely continuous kernel, $p_{t}^{L_{V_c}^\Om}$, in the sense that for every $v\in L^2(\Om)$ it holds,
\begin{eqnarray}
e^{-tL_\Om^{V_c}}v  =\int_\Om  p_{t}^{L_{V_c}^\Om}(\cdot,y)v(y)\,dy,\ a.e.\ x,y\in\Om,\ \forall\,t>0.
\label{f0}
\end{eqnarray}
\label{ExistKernel}
\end{prop}
We shall call  $p_{t}^{L_{V_c}^\Om}$ the heat kernel of $e^{-tL_\Om^{V_c}}$. Let us emphasize that formula (\ref{f0}) implies that the heat kernel $p_{t}^{L_{V_c}^\Om}$ is finite a.e..
\begin{proof}
Owing to the known facts that $e^{-tL_0^\Om},\ t>0$ has a nonnegative heat kernel and $V_c\wedge k$ is bounded we deduce that $e^{-tH_k}$ has a nonnegative heat kernel as well, which we denote by $P_{t,k}$. Moreover, since the sequence $(V_c\wedge k)_k$ is monotone increasing, we obtain with the help of Duhamel's formula that  the sequence $(P_{t,k})_k$ is monotone increasing as well. Set
\begin{eqnarray}
p_{t}^{L_{V_c}^\Om}(x,y):=\lim_{k\to\infty}P_{t,k}(x,y),\ \forall\,t>0,\ a.e.\ x,y\in\Om.
\end{eqnarray}
Then  $p_{t}^{L_{V_c}^\Om}$ has all the first properties mentioned in the proposition.\\
Let $v\in L^2(\Om)$ be nonnegative. Then by monotone convergence theorem, together with Proposition (\ref{sg-Sol}) we get
\begin{eqnarray}
e^{-tL_\Om^{V_c}}v&=&\lim_{k\to\infty}u_k(t)=\lim_{k\to\infty}e^{-tH_k}v
=\lim_{k\to\infty}\int_\Om  P_{t,k}(\cdot,y)v(y)\,dy\nonumber\\
&=&\int_\Om  p_{t}^{L_{V_c}^\Om}(\cdot,y)v(y)\,dy,\ a.e.\ x,y\in\Om,\ \forall\,t>0.
\end{eqnarray}
For an arbitrary $v\in L^2(\Om)$ formula (\ref{f0}) follows from the last step by decomposing $v$ into its positive and negative parts.
\end{proof}
\begin{rk}
{\rm
From Proposition \ref{ExistKernel} in conjunction with formula (\ref{TransformedSg}), we obtain existence of an absolutely continuous kernel for the semigroups $T_{t,\Om}^{w_c}$ for each $t>0$ and each $0<c\leq c^*$. We shall denote by $q_t^{\Om}$ the already mentioned kernel and we call it the heat kernel of $Q_\Om^c$. For the particular case $\Om=\R^d$ we will omit the superscript $\R^d$. Let us stress that kernels $q_t^{\Om}$ depend upon $c$. However, we shall keep the dependence hidden and emphasize it, whenever it would be relevant.\\
Once again, formula (\ref{TransformedSg}) leads to
\begin{eqnarray}
q_t^\Om(x,y)=\frac{p_t^{L_{V_c}^\Om}(x,y)}{w_c(x)w_c(y)},\ \forall\,t>0,\ a.e.,\ x,y\in\Om.
\label{TransformedKernel}
\end{eqnarray}

}
\label{Transfer}
\end{rk}
In the particular case $\Om=\R^d$, we proceed to establish a very interesting global property for Dirichlet forms $Q^c$, namely conservativeness. The significance of conservativeness lies among others, in the fact that the Hunt process associated with $Q^c$ can start at quasi-every  point and has an infinite life time.

To achieve our goal we introduce the following forms.
We fix $0<c\leq c_*$ and define the forms $\dot\calF^c$ by:
\begin{align}
\dom(\dot\calF^c)&= C_c^\infty(\R^d),\
 \dot\calF^c[f]=  \frac{\A(d,\alpha)}{2}\int\int \frac{(f(x)-f(y))^2}{|x-y|^{d+\alpha}} w_c(x)w_c(y)\,dxdy,\ \forall\,f\in C_c^\infty(\R^d).
\end{align}
\begin{lem}
\begin{enumerate}
\item The quadratic form $Q^c$ is well defined, in the sense that $\dot\calF^c[f]<\infty$ for every $f\in C_c^\infty(\R^d)$. Moreover it is closable in $L^2(\R^d,w_c^2dx)$.\\
Let
$$
\calF^c= \text{the closure of } \dot\calF^c \text{ in } L^2(\R^d,w_c^2dx).
$$
\item  For $c<c^*$, it holds $Q^c=\calF^c$.
\end{enumerate}
\end{lem}
\label{Equality}
\begin{proof}

The first part of assertion 1., is indeed equivalent to the following two conditions (see \cite[Example 1.2.1]{fukushima-book}): for every compact set $K$ and every open set $\Om_1$ with $K\subset\Om_1$ one should have
\begin{eqnarray*}
\int_{K\times K}|x-y|^{2-d-\alpha}w_c(x)w_c(y)\,dx\,dy<\infty,\
\int_{K}\int_{\Om_1^c}|x-y|^{-d-\alpha}w_c(x)w_c(y)\,dx\,dy<\infty.
\end{eqnarray*}
The first part of the latter conditions was already proved for bounded sets in \cite[Lemma 3.1]{benamor-JPA}. Let us prove the finiteness of the second integral.\\
{\em Case 1}: $0\in K$. Then $0\not\in\Om_1^c$ and $\sup_{y\in\Om_1^c}w_{c}(y)<\infty$. Set $\delta:=dist(K,\Om_1^c)>0$. Then $\delta>0$. Thus for every $x\in K$ we get
\begin{eqnarray}
\int_{\Om_1^c}|x-y|^{-d-\alpha}\,dy\leq \int_{\{|x-y|>\delta\}}|x-y|^{-d-\alpha}\,dy\leq C<\infty.
\end{eqnarray}
Hence the second integral is finite.\\
{\em Case 2}: $0\not\in K$. Then $\sup_{x\in K}w_{c}(x)<\infty$. Let  $x\in K$. Making use of identity (\ref{Riesz}) we obtain
\begin{align}
\int_{\Om_1^c\cap B_1} |x-y|^{-d-\alpha}w_c(y)\,dy\leq \delta^{-2\alpha}\int_{\Om_1^c\cap B_1} \frac{w_c(y)}{|x-y|^{d-\alpha}}|y|^{-\alpha}\leq C w_c(x),
\end{align}
and
\begin{align}
\int_{\Om_1^c\cap B_1^c} |x-y|^{-d-\alpha}w_c(y)\,dy\leq \int_{\{|x-y|>\delta\}}|x-y|^{-d-\alpha}\,dy\leq C<\infty.
\end{align}
Hence, once again the second integral is finite and the first part of assertion 1. is proved.\\
The proof of closability is a standard matter so we omit it.\\
2.: Let $0<c<c^*$. First we show  $C_c^\infty(\R^d)\subset \dom(Q^c)$. Let $f\in C_c^\infty(\R^d)$. We have to prove $w_c f\in\Wa$. On the one hand $w_c f\in L^2(\R^d)$. On the other hand, following the lines of the proof of \cite[Lemma 3.1]{benamor-JPA} we obtain
\begin{align}
\int\int \frac{(w_c(x)f(x) - w_c(y)f(y))^2}{|x-y|^{d+\alpha}}\,dx\,dy&= \int\int \frac{(f(x) - f(y))^2}{|x-y|^{d+\alpha}}w_c(x)w_c(y)\,dx\,dy\nonumber\\
& + \int f^2(x) w_c^2(x) V_c(x)\,dx.
\end{align}
We already proved that the first integral is finite. Since $c<c^*$ and $f\in C_c^\infty(\R^d)$, then
$$
\int f^2(x) w_c^2(x) V_c(x)\,dx<\infty.
$$
Hence $w_c f\in\Wa$ and $f\in D(Q^c)$.\\
Let us recall, by Lemma \ref{closability}-3, $C_c^\infty(\R^d\setminus\{0\})$ is a form core for $Q^c$. Regarding the inclusion $C_c^\infty(\R^d\setminus\{0\})\subset C_c^\infty(\R^d)$, the latter space is also a core for $Q^c$. Furthermore,  $C_c^\infty(\R^d)$ is obviously a core for $\calF^c$.  Hence forms $Q^c$ and $\calF^c$ coincide the common core $C_c^\infty(\R^d)$. Thereby they are identical and the proof is completed.
\end{proof}
%
%The following global property of the Dirichlet form $Q^c$ has an independent interest.
\begin{theo}
Assume that $\Om=\R^d$. Then for every $0<c\leq c^*$ the form $Q^c$ is conservative. It follows, in particular,
\begin{eqnarray}
\int_{\R^d} p_{t}^{L_{V_c}^{\R^d}}(x,y) w_c(y)\,dy=w_c(x),\ \forall\,x\neq 0.
\label{Totalmass}
\end{eqnarray}
\label{conservative}
\end{theo}
\begin{proof}
Identity (\ref{Totalmass}) is an immediate consequences of the conservativeness property which we proceed to prove.\\
As a  first step we shall prove conservativeness in the subcritical case.\\
{\em The subcritical case.} Let $0<c<c^*$. On the light of Lemma \ref{Equality}-2, we shall use Masamune's result \cite{Masamune}, which asserts in our special case: If
\begin{eqnarray}
\sup_x w_c^{-1}(x)\int_{\R^d} (1\wedge |x-y|^2)|x-y|^{-d-\alpha}w_c(y)\,dy<\infty,
\label{UniformBound}
\end{eqnarray}
and for some $a>0$,
\begin{eqnarray}
\int_{\R^d} e^{-a|x|} w_c^2(x)\,dx<\infty,
\label{Integrability}
\end{eqnarray}
then the form $Q^c$ is conservative.\\
Clearly condition (\ref{Integrability}) is fulfilled.\\
Let us show that condition (\ref{UniformBound}) is satisfied as well. We recall $w_c(x)=|y|^{-\beta(c)}$ for some $\beta:=\beta(c)\in(0,\frac{d-\alpha}{2})$. Let
\begin{eqnarray}
 I_1(x):=\int_{B_1(x)} \frac{|y|^{-\beta}}{|x-y|^{d+\alpha-2}}\,dy,\ \alpha'=2-\alpha.
\end{eqnarray}
Let $|x|\leq 2$ and  $\gamma:=\frac{d-\alpha}{2}$. Then
\begin{eqnarray}
 I_1(x)&=&\int_{B_1(x)} \frac{|y|^{-\beta} |y|^{\alpha'}}{|x-y|^{d-\alpha'}}|y|^{-\alpha'}\,dy\nonumber\\
 &\leq& 2^{\alpha'} \int_{B_1(x)} \frac{|y|^{-\beta}}{|x-y|^{d-\alpha'}} |y|^{-\alpha'}\,dy.
\end{eqnarray}
In the case $\alpha\geq 1$, we obtain
$$
\alpha'>0,\ 0<\beta<d-\alpha'.
$$
Thus we apply identity  (\ref{Riesz}) to get
\begin{eqnarray}
\int_{B_1(x)} \frac{|y|^{-\beta}}{|x-y|^{d-\alpha'}}|y|^{-\alpha'}\,dy&\leq& \int_{\R^d}
 \frac{|y|^{-\beta}}{|x-y|^{d-\alpha'}}|y|^{-\alpha'}\,dy\nonumber\\
&=&C w_c(x),
\end{eqnarray}
and
$$
I_1(x)\leq C  w_c(x).
$$
In the case $0<\alpha<1$, change $2-\alpha$ by $\alpha_1=\frac{1-\alpha}{2}$ to obtain (by similar arguments)
\begin{eqnarray}
I_1(x)\leq C\int_{B_1(x)} \frac{|y|^{-\beta}}{|x-y|^{d-\alpha_1}}|y|^{-\alpha_1}\,dy
\leq C w_c(x).
\end{eqnarray}
Let now $|x|\geq 2$. Then for every $y\in B_1(x)$ we have $|y|\geq |x|-1\geq1$. Thus
$$
I_1(x)\leq C  \frac{1}{(|x|-1)^{\beta}}.
$$
Hence in both cases we obtain
$$
\sup_x  w_c(x) I_1(x)<\infty.
$$
For the remaining integral, let
\begin{eqnarray}
 I_2(x):=\int_{B_1^c(x)} \frac{|y|^{-\beta}}{|x-y|^{d+\alpha}}\,dy.
\end{eqnarray}
We decompose the integral into the sum of three integrals
\begin{eqnarray}
I_2(x)&=&\int_{B_1^c(x)\cap\{|y|< 1\}} \frac{|y|^{-\beta}}{|x-y|^{d+\alpha}}\,dy
+\int_{B_1^c(x)\cap\{|y|> 1\wedge |x|/2\}} \frac{|y|^{-\beta}}{|x-y|^{d+\alpha}}\,dy\nonumber\\
&+&\int_{B_1^c(x)\cap\{1<|y|<|x|/2\}} \frac{|y|^{-\beta}}{|x-y|^{d+\alpha}}\,dy.
\end{eqnarray}
On the set $B_1^c(x)\cap\{|y|< 1\}$ we have $|x-y|^{-d-\alpha}\leq |x-y|^{-d+\alpha}|y|^{-\alpha}$. Thus
\begin{eqnarray}
\int_{B_1^c(x)\cap\{|y|< 1\}} \frac{|y|^{-\beta}}{|x-y|^{d+\alpha}}\,dy
&\leq& \int_{B_1^c(x)\cap\{|y|< 1\}} \frac{|y|^{-\beta}}{|x-y|^{d-\alpha}}|y|^{-\alpha}\,dy\nonumber\\
&\leq& \int_{B_1^c(x)}\frac{|y|^{-\beta}}{|x-y|^{d-\alpha}}|y|^{-\alpha}\,dy
\leq C|x|^\beta.
\end{eqnarray}
Furthermore
\begin{eqnarray}
\int_{B_1^c(x)\cap\{|y|> 1\wedge |x|/2\}} \frac{|y|^{-\beta}}{|x-y|^{d+\alpha}}\,dy
\leq  2^{\beta} |x|^{-\beta}\int_{B_1^c(x)} |x-y|^{-d-\alpha}\,dy
\leq C|x|^{-\beta}.
\end{eqnarray}
For the last integral we have two situations: if the set $E:=B_1^c(x)\cap\{1<|y|<|x|/2\}$ is empty, then we are done. If not, then on the set $E$, it holds
\begin{eqnarray}
|x-y|\geq \frac{|x|}{2}\geq |y|>1.
\end{eqnarray}
Hence
\begin{eqnarray}
\int_E \frac{|y|^{-\beta}}{|x-y|^{d+\alpha}}\,dy&\leq&
2^{\beta} |x|^{-\beta}\int_E \frac{|y|^{-\beta}}{|x-y|^{\frac{d+3\alpha}{2}}}\,dy\nonumber\\
&\leq& 2^{\beta} |x|^{-\beta}\int_E \frac{|y|^{-\beta}}{|y|^{\frac{d+3\alpha}{2}}}\,dy
\leq C  w_c(x).
\end{eqnarray}
Finally we get $\sup_{x} w_c(x) I_2(x)<\infty$.\\
Putting all together we get that condition (\ref{UniformBound}) is fulfilled. Thereby the form $Q^c$ is conservative.

{\em The critical case:} We recall that conservativeness means
$$
T_t^{w_{c^*}}1 =1.
$$
Here $T_t^{w_{c^*}}$ is the $L^\infty$-semigroup related to $Q^{c^*}$. Let $(\varphi_k)\subset C_c(\R^d)$ be a sequence of positive functions such that $\varphi_k\uparrow 1$. Then from the standard construction of the $L^\infty$-semigroup ($(\varphi_k)\subset L^2(\R^d,w_{c^*}^2dx)\cap L^\infty(\R^d)$) together with Remark \ref{Transfer}, we achieve
\begin{align*}
T_t^{w_{c^*}}1=\lim_{k\to\infty}\int q_t (\cdot,y)\varphi_k(y) w_{c^*}^2(y)\,dy.
\end{align*}
An application of monotone convergence theorem leads to
$$
T_t^{w_{c^*}}1= \int q_t (\cdot,y) w_{c^*}^2(y)\,dy.
$$
From the contraction property of the $L^\infty$-semigroup related to $Q^{c^*}$ we derive
$$
\int q_t (\cdot,y) w_{c^*}^2(y)\,dy\leq 1,
$$
which leads to
\begin{eqnarray}
\int_{\R^d} p_{t}^{L_{V_{c^*}}^{\R^d}}(x,y) w_{c^*}(y)\,dy\leq w_{c^*}(x),\ \forall\,x\neq 0,\ t>0.
\label{Ineq1}
\end{eqnarray}
Now the first part of the proof yields, for every $0<c<c^*$,
\begin{align}
w_c(x)=\int_{\R^d} p_{t}^{L_{V_c}^{\R^d}}(x,y) w_c(y)\,dy&\leq \int_{\R^d} p_{t}^{L_{V_{c^*}}^{\R^d}}(x,y) w_{c}(y)\,dy\nonumber\\
&=\int_{B_1} p_{t}^{L_{V_{c^*}}^{\R^d}}(x,y) w_{c}(y)\,dy + \int_{B_1^c} p_{t}^{L_{V_{c^*}}^{\R^d}}(x,y) w_{c}(y)\,dy.
\end{align}
Let us observe that the first integrant is increasing, whereas the second one is decreasing with respect to $c$.  Hence, letting $c\to c^*$ and combining monotone convergence theorem  with inequality (\ref{Ineq1}) we achieve $T_t^{w_{c^*}}1 =1$ and the proof is completed.
\end{proof}
\begin{rk}
{\rm
Theorem \ref{conservative} was proved in \cite[Theorem 3.1]{BogdanHardy} and \cite[Theorem 2.4]{Jakubowski2019} however with a different method using integral analysis. Our proof is different.

}
\end{rk}
\section{Heat kernel estimates, local and global behavior of the minimal solution in space variable}
Along this section we assume that $\Om$ is bounded.\\
Since potentials $V_c$ are too singular (they are not in the Kato-class, for example), investigations of properties of solutions of the  evolution equations related to $L_0^\Om - V_c$ becomes a delicate problem. In fact, the theory of elliptic regularity is no more applicable in this context. To overcome the difficulties we shall make use of  the pseudo-ground state transformation for forms $\calE_\Om^{V_c}$ performed in Lemma \ref{closability} together with an improved Sobolev inequality. This transformation has the considerable effect to mutate forms $\calE_\Om^{V_c}$ to Dirichlet forms and to mutate $e^{-tL_{V_c}^\Om}$ to Markovian ultracontractive semigroup on some weighted Lebesgue  space. The analysis of the transformed forms will then lead us to get satisfactory results concerning estimating their heat kernels (sharply) and hence to reveal  properties of minimal solutions.\\
As a first step  we proceed to prove that Sobolev inequality holds for the $w_c$-transform of the form $\calE_\Om^{V_c}$. As a byproduct we obtain that the semigroup of the transformed from is ultracontractive and then very interesting upper bound for the heat kernel are derived.
\begin{theo}
\begin{enumerate}
\item Let $0<c<c^*$ and $p=\frac{d}{d-\alpha}$. Then the following Sobolev inequality holds
\begin{eqnarray}
\parallel f^2\parallel_{ {L^p}(w_c^2dx)}\leq AQ_\Om^{c}[f],\ \forall\,f\in D(Q_\Om^c).
\label{w-sob}
\end{eqnarray}
\item For $c=c^*$ let  $1<p<\frac{d}{d-\alpha}$. Then the following Sobolev inequality holds
\begin{eqnarray}
\parallel f^2\parallel_{ {L^p}(w_{c^*}^2dx)}\leq AQ_\Om^{c^*}[f],\ \forall\,f\in D(Q_\Om^{c^*}).
\label{w-sob2}
\end{eqnarray}
\item For each $t>0$, and  $0<c\leq c^*$ the operator $T_{t,\Om}^{w_c}$ is ultracontractive.
\item For every $0<c<c^*$,  there is a finite constant $C>0$ such that
\begin{eqnarray}
0<p_t^{L_{V_c}^\Om}(x,y)\leq \frac{C}{t^{\frac{d}{\alpha}}} w_c(x)w_c(y),\ a.e.\ {on}\ \Om\times\Om,\ \forall\,t>0.
\label{UppBound}
\end{eqnarray}
\item For $c=c^*$, and $1<p<\frac{d}{d-\alpha}$ there is a finite constant $C>0$ such that
\begin{eqnarray}
0<p_t^{L_{V_{c^*}}^\Om}(x,y)\leq \frac{C}{t^{\frac{p}{p-1}}} w_{c^*}(x)w_{c^*}(y),\ a.e.\ {on}\ \Om\times\Om,\ \forall\,t>0.
\label{UppBound2}
\end{eqnarray}
\end{enumerate}
\label{UC}
\end{theo}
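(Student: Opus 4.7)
The plan is to derive the weighted Sobolev inequalities (Parts 1 and 2) from the ground-state identity \eqref{ID} combined with the fractional Hardy inequality \eqref{hardy-global} and the classical fractional Sobolev inequality on $\R^d$, then apply the Nash--Varopoulos theorem to the Dirichlet form $Q^c$ to obtain ultracontractivity of $T_t^{w_c}$ (Part 3), and finally unwind the conjugation $T_t^{w_c}=w_c^{-1}e^{-tL_{V_c}^\Om}w_c$ to turn the $L^1\to L^\infty$ bound on $T_t^{w_c}$ into the pointwise estimates \eqref{UppBound} and \eqref{UppBound2} (Parts 4 and 5).

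For the subcritical case (Part 1), set $g:=w_cf\in\Wo$. By \eqref{ID} one has $Q^c[f]=\calE_\Om^{V_c}[g]$, Hardy's inequality gives $Q^c[f]\ge(1-c/c^*)\calE_\Om[g]$, and the fractional Sobolev embedding yields $\calE_\Om[g]\ge C_S\|g\|^2_{L^{2p}(\R^d)}$ with $2p=2d/(d-\alpha)$. Since $\Om$ is bounded and $w_c(x)=|x|^{-\beta(c)}$ with $\beta(c)>0$, there exists $m>0$ such that $w_c\ge m$ on $\Om$, whence $w_c^{2p}\ge m^{2p-2}w_c^2$ pointwise. Therefore
\[
\|g\|_{L^{2p}}^{2p}=\int_\Om|f|^{2p}w_c^{2p}\,dx\ge m^{2p-2}\|f^2\|_{L^p(w_c^2 dx)}^p,
\]
and chaining the three inequalities produces \eqref{w-sob}.

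The critical case $c=c^*$ (Part 2) is the technical heart of the argument: Hardy is saturated, so the preceding gain vanishes and one needs a subcritical improved Hardy--Sobolev inequality on the bounded set $\Om$, namely for every $1<p<d/(d-\alpha)$ an estimate of the form
\[
\calE_\Om[g]-c^*\int_\Om\frac{g^2}{|x|^\alpha}\,dx\ge C_p\|g\|_{L^{2p}(\Om)}^2,\quad \forall g\in\Wo.
\]
Such an inequality can be obtained by extracting a Frank--Seiringer type remainder term from the saturated Hardy inequality and invoking fractional Sobolev at the subcritical exponent, or alternatively by interpolating the borderline Sobolev estimate with the Poincar\'e bound available on bounded $\Om$. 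Once it is in hand, the conversion with $g=w_{c^*}f$ is identical to Part 1 and yields \eqref{w-sob2}. This improved inequality is the main obstacle: without a genuine remainder beyond the saturated Hardy inequality, $\calE_\Om^{V_{c^*}}$ controls no norm stronger than $L^2$, and the whole weighted Sobolev embedding collapses.

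The ultracontractivity and kernel bounds (Parts 3--5) follow mechanically. The weighted Sobolev embedding for the Dirichlet form $Q^c$ plugs into the Nash--Varopoulos theorem and delivers $\|T_t^{w_c}\|_{L^1(w_c^2 dx)\to L^\infty(w_c^2 dx)}\le C t^{-p/(p-1)}$, which is precisely ultracontractivity (with exponent $d/\alpha$ in the subcritical case). Since the kernel of $T_t^{w_c}$ with respect to $w_c^2dx$ is $w_c(x)^{-1}p_t^{L_{V_c}^\Om}(x,y)w_c(y)^{-1}$, the $L^\infty$ kernel bound coming from ultracontractivity rewrites as
\[
p_t^{L_{V_c}^\Om}(x,y)\le C t^{-d/\alpha}w_c(x)w_c(y)\quad\text{a.e.,}
\]
and analogously for \eqref{UppBound2}. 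Strict positivity is inherited from the strict positivity of $p_t^{L_0^\Om}$ via the monotone construction $P_{t,k}\uparrow p_t^{L_{V_c}^\Om}$ assembled in the preceding proposition.
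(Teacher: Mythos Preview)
Your proof is correct and follows essentially the same route as the paper: Hardy plus the fractional Sobolev embedding (with the weight conversion coming from boundedness of $\Om$, which the paper phrases as ``H\"older'') for Part~1, the Frank--Lieb--Seiringer improved Hardy--Sobolev inequality for Part~2, and then the standard Davies/Varopoulos passage from Sobolev to ultracontractivity to kernel bounds for Parts~3--5. The only difference is cosmetic: where you exhibit the pointwise lower bound $w_c\ge m$ on the bounded set $\Om$ to compare $w_c^{2p}$ with $w_c^2$, the paper simply invokes H\"older and boundedness, and where you sketch how the critical remainder inequality might be derived, the paper just cites it.
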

\begin{proof}
1) and 2): Let $0<c<c^*$. From Hardy's inequality we derive
\begin{eqnarray}
(1-\frac{c}{c^*})\calE_\Om[f]\leq\calE_\Om^{V_c}[f],\ \forall\,f\in\Wo.
\end{eqnarray}
Now we use the known fact that $\Wo$ embeds continuously into $L^{\frac{2d}{d-\alpha}}$, to obtain the following Sobolev's inequality
\begin{eqnarray}
(\int_\Om |f|^{\frac{2d}{d-\alpha}}\,dx\big)^{\frac{d-\alpha}{d}}\leq C\calE_\Om^{V_c}[f],\ \forall\,f\in\Wo.
\end{eqnarray}
An application of H\"older's inequality together with Lemma \ref{closability} and the fact that $\Om$ is bounded, yield then inequality (\ref{w-sob}).\\
Towards proving Sobolev's inequality in the critical case we use the improved Hardy--Sobolev inequality, due to Frank--Lieb--Seiringer [Theorem 2.3]: For every $1\leq p<\frac{d}{d-\alp}$ there is a constant $S_{d,\alp}(\Om)$ such that
\begin{eqnarray}
(\int |f|^{2p}\,dx)^{1/p}\leq S_{d,\alp}(\Om)\big(\calE_\Om[f]-c^*\int_\Om\frac{f^2(x)}{|x|^\alpha}\,dx\big),\ \forall\,f\in\Wo,
\label{ISI}
\end{eqnarray}
and the rest of the proof runs as before.\\
3) and 4): As $Q_\Om^c$ is a Dirichlet form, by the standard theory of Markovian semigroups, it is known (see \cite[p.75]{davies-book}) that Sobolev inequality implies ultracontractivity of $T_{t,\Om}^{w_c}$ together with the bound
\begin{eqnarray}
\|T_{t,\Om}^{w_c}\|_{L^2(\Om,w_c^2dx),L^\infty(\Om)}\leq\frac{c}{t^{d/\alpha}},\ t>0.
\end{eqnarray}
Now ultracontractivity in turns implies that the semigroup $e^{-tA_\Om^{w_c}}$ has a nonnegative symmetric (heat) kernel, which we denote by $q_t^\Om$ and  the latter estimate yields by \cite[p.59]{davies-book})
\begin{eqnarray}
0\leq q_t^\Om(x,y)\leq \frac{c}{t^{d/\alpha}},\ a.e.,\ \forall\,t>0.
\end{eqnarray}
Recalling formula (\ref{TransformedKernel}):
\begin{eqnarray}
q_t^\Om(x,y)=\frac{p_t^{L_{V_c}^\Om}(x,y)}{w_c(x)w_c(y)},\ a.e.,
%\label{TransformedKernel}
\end{eqnarray}
yields the upper bounds (\ref{UppBound}) and (\ref{UppBound2}).\\
The proof of 5. is similar to the latter one so we omit it.
\end{proof}
At this stage we turn our attention to establish lower a bound for the heat kernels $p_t^{L_{V_c}^\Om}$.\\
Let us first observe that from the definition, the Dirichlet form $Q_\Om^c$ is nothing else but the part of the form $Q^c$ on $\Om$, i.e., $Q^c_\Om=Q^c|_{D(Q^c_\Om)}$ where
$$
\dom(Q^c_\Om)=\{f\in D(Q^c):f\equiv0\,\, {\rm on }\,\, \Omega^c\}.
$$
Since $Q^c$ is a Dirichlet form and $q_t$ is continuous there exists a Hunt process on $\mathbb{R}^d$ such that $$\mathbb{P}^x(X_t\in A)=\int_A q_t(x,y) w_c^2(y)\,dy, \quad A\in\mathcal{B}(\mathbb{R}^d).$$
By positivity of $w_c$ and the Dynkin-Hunt formula we get
$$q_t^\Om(x,y)=q_t(x,y)-\mathbb{E}^x[\tau_\Omega<t,q_{t-\tau_\Om}(X_{\tau_\Om},y)],\quad x,y\in \Om,$$
where $\tau_\Omega=\inf\{t>0:X_t\notin\Om\}.$

Let $S(t,x):=|x|^{\beta(c)}+t^{\beta(c)/\alpha}$ and $H(t,x):=1+w_c(xt^{-1/\alpha})=w_c(x)S(t,x)$.  We know from \cite[Lemma 5.1, Theorem 1.1]{BogdanHardy} together with formula (\ref{TransformedKernel}) that
\begin{equation}\label{eq:HKest_q}q_t(x,y)\approx S(t,x)\left(t^{-d/\alpha}\wedge\frac{t}{|x-y|^{d+\alpha}}\right)S(t,y),\quad t>0, \,x,y\in\mathbb{R}^d.
\end{equation}
\begin{theo}
For every $0<c\leq c^*$,  every compact subset $K\subset\Om$ and every $t>0$, there is a finite constant $\kappa_t=\kappa_t(K)>0$ such that
\begin{eqnarray}
p_t^{L_{V_c}^\Om}(x,y)\geq \kappa_tw_c(x)w_c(y),\ a.e.\ {on}\ K\times K,\ \forall\,t>0.
\label{LowBound}
\end{eqnarray}
\label{ThmLowBound}
\end{theo}
\begin{proof}
Since $0\in \Om$ we may and do assume that $0\in K$ (we can consider infimum of $p_t^{L_{V_c}^\Om}$ on the larger set).

First we prove the lower bound for $q_t^\Om$ on small balls around zero and small $t>0$. Let $0<r<1$ be such that $\overline{B_{4r}}\subset\Om$ and $x,y\in B_{r}$. Then Dynkin--Hunt formula leads to
\begin{eqnarray}
q_t^\Om(x,y)&=&q_t(x,y) - \mathbb{E}^x[t>\tau_{\Om},q_{t-\tau_{\Om}}(y,X_{\tau_{\Om}})]\nonumber\\
&\geq& q_t(x,y) - \sup_{s\leq t,z\in \Om^c}q_{s}(y,z).
\end{eqnarray}
Since $S(\cdot,y)$ is increasing and $|y-z|>|z|/2>r$ for $z\in\Om^c$ by \eqref{eq:HKest_q} we obtain
\begin{align*}
\sup_{s\leq t,z\in \Om^c}q_{s}(y,z)&\geq c_1 \sup_{z\in\Om^c}S(t,y)\frac{t}{|z|^{d+\alpha}}S(t,z)\\
&\geq c_1S(t,y) \frac{t}{r^{d+\alpha}}\left( 1 + t^{\beta(c)/\alpha}\right).
\end{align*}
Hence and again \eqref{eq:HKest_q} yields for $t\leq 1$
\begin{align*}
\frac{q_t^\Om(x,y)}{S(t,y)}&\geq c_2 t^{\beta(c)/\alpha}\left(t^{-d/\alpha}\wedge \frac{t}{|x-y|^{d+\alpha}}\right)-c_1 \frac{t}{r^{d+\alpha}}.
\end{align*}
For $|x-y|\leq t^{1/\alpha}$ and $t\leq T(r):=\left(\frac{c_2r^{d+\alpha}}{2c_1}\right)^{\alpha/(\alpha+d-\beta(c))}<r$ we get
\begin{align*}
\frac{q_t^\Om(x,y)}{S(t,y)}&\geq c_2 t^{(\beta(c)-d)/\alpha}-c_1 \frac{t}{r^{d+\alpha}}\geq \frac{c_2}{2} t^{(\beta(c)-d)/\alpha}.
\end{align*}
This implies
$$q_t^\Om(x,y)\geq c S(t,x)S(t,y)t^{-d/\alpha},\quad |x|,|y|\leq \frac{t^{1/\alpha}}{2}. $$ In consequence
$$p_t^{L_{V_c}^\Om}(x,y)\geq c H(t,x)H(t,y)t^{-d/\alpha}\geq c H(t,x)H(t,y)p_t^{L_{0}^\Om}(x,y),\quad |x|,|y|\leq \frac{t^{1/\alpha}}{2}.$$

Let $|x|\leq t^{1/\alpha}/2<|y|\leq r$. Set $D:=B_{t^{1/\alpha}/4}\setminus B_{t^{1/\alpha}/8}$. By Duhamel's formula  and estimates of $p_t^{L_0^\Om}$  $$p_t^{L_{V_c}^\Om}(z,y)\geq p_t^{L_0^\Om}(z,y)\geq c \frac{t}{|y|^{d+\alpha}}, \quad z\in D.$$ By the semigroup property
\begin{align*}
p_t^{L_{V_c}^\Om}(x,y)&\geq \int_{D}p_{t/2}^{L_{V_c}^\Om}(x,z)p_{t/2}^{L_{V_c}^\Om}(z,y)\,dz\geq cH(t,x)t^{-d/\alpha}\frac{t}{|y|^{d+\alpha}}|D|\\&\geq c H(t,x)H(t,y)p_t^{L_{0}^\Om}(x,y).
\end{align*}
For $t^{1/\alpha}/2<|y|,|x|$. We get
$$p_t^{L_{V_c}^\Om}(x,y)\geq p_t^{L_0^\Om}(x,y)\geq \inf_{x,y\in K}p_t^{L_0^\Om}(x,y)=c_t(K)>0.$$
For $|x|\leq t^{1/\alpha}/2\leq r <|y|$ one can obtain by the semigroup property $p_t^{L_{V_c}^\Om}(x,y)\geq cH(t,x)c_t(K)$.
In particular we have
\begin{equation*}
p_t^{L_{V_c}^\Om}(x,y)\geq H(t,x)H(t,y)c_t(K),\quad t\leq T(r),\,x,y\in K.
\end{equation*}
If $t>T(r)$ we use the semigroup property to obtain
\begin{align*}
p_t^{L_{V_c}^\Om}(x,y)&\geq \int\int_{|z|,|w|\leq r} p_{T(r)/4}^{L_{V_c}^\Om}(x,z)p_{t-T(r/2)}^{L_{V_c}^\Om}(z,w)p_{T(r)/4}^{L_{V_c}^\Om}(w,y)dzdw\\ &\geq c(r,K)H(t,x)H(t,y)\inf_{|z|,|w|<r}p_{t-T(r/2)}^{L_{0}^\Om}(z,w),
\end{align*}
which ends the proof.
\end{proof}
We are now in position to describe the exact behavior, in space variable, of the minimal solution  of equation (\ref{heat1}), especially near $0$.
\begin{theo}
\begin{enumerate}
\item For every $t>0$ there is a finite constant $c_t>0$ such that,
\begin{eqnarray}
u(t,x)\leq c_tw_c(x),\ a.e.\ on\ \Om.
\label{UB}
\end{eqnarray}
It follows in particular that $u(t,x)$ is bounded away from zero.
\item For every $t>0$, there are finite constants $c_t,\ c'_t>0$ such that
\begin{eqnarray}
c'_tw_c(x)\leq u(t,x)\leq c_tw_c(x),\ a.e.\ {\rm near}\ 0.
\label{Sharpo}
\end{eqnarray}
\end{enumerate}
Hence $u(t)$  has a standing singularity at $0$.
\label{SharpLoc}
\end{theo}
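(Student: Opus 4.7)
The plan is to reduce both bounds in Theorem \ref{SharpLoc} to the two-sided heat kernel estimates via the integral representation $u(t,x)=\int_\Om p_t^{L_{V_c}^\Om}(x,y)u_0(y)\,dy$ furnished by Proposition \ref{sg-Sol} together with the kernel formula (\ref{f0}). The upper bound (\ref{UB}) will follow from the ultracontractive estimate of Theorem \ref{UC}, and the two-sided bound near zero from pairing this with the lower bound (\ref{LowBound}), applied on a carefully chosen compact set.

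For the upper bound I would start from $p_t^{L_{V_c}^\Om}(x,y)\le C_tw_c(x)w_c(y)$ given by Theorem \ref{UC} and integrate against $u_0$ to obtain
\[
u(t,x)\le C_tw_c(x)\int_\Om w_c(y)u_0(y)\,dy.
\]
To see that the integral is finite I would check $w_c\in L^2(\Om)$: since $\beta(c)\le\frac{d-\alpha}{2}$ we have $2\beta(c)\le d-\alpha<d$, so $|y|^{-2\beta(c)}$ is integrable near $0$, and boundedness of $\Om$ takes care of the tail. Cauchy--Schwarz against $u_0\in L^2(\Om)$ then yields $\int_\Om w_cu_0\,dy<\infty$ and hence (\ref{UB}). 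Local boundedness of $u(t)$ away from the origin follows because $w_c$ is bounded on every compact subset of $\Om\setminus\{0\}$.

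For the lower bound near zero the difficulty is that (\ref{LowBound}) provides $p_t^{L_{V_c}^\Om}\ge\kappa_tw_c(x)w_c(y)$ only on $K\times K$ for compact $K\sub\Om$, whereas $u_0$ need not be supported near $0$. I would discard the trivial case $u_0\equiv 0$ and use inner regularity of Lebesgue measure to select a compact set $K'\sub\Om$ with $\int_{K'}u_0\,dy>0$; since $w_c>0$, the quantity $m:=\int_{K'}w_c(y)u_0(y)\,dy$ is then strictly positive. Next I would take a closed ball $B\sub\Om$ centred at $0$, form $K:=B\cup K'$, and apply (\ref{LowBound}) to $K$: for a.e.\ $x\in B$,
\[
u(t,x)\ge\int_{K'}p_t^{L_{V_c}^\Om}(x,y)u_0(y)\,dy\ge\kappa_tw_c(x)\int_{K'}w_c(y)u_0(y)\,dy=\kappa_tm\,w_c(x).
\]
Combined with the upper bound already established, this yields (\ref{Sharpo}) with $c'_t=\kappa_tm>0$. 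The standing singularity at $0$ is then immediate, since $w_c(x)=|x|^{-\beta(c)}\to+\infty$ as $x\to 0$ forces $u(t,\cdot)$ to blow up at the origin.

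The only real technical obstacle is the simultaneous choice of the compact $K$: it must contain a neighbourhood of $0$ so that $x$ may approach the origin, and it must also meet the support of $u_0$ in a set of positive measure so that $c'_t$ is strictly positive. Taking the union $B\cup K'$ resolves both requirements at once, and everything else reduces to a direct substitution of the heat kernel bounds into the semigroup representation.
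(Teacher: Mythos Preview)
Your proof is correct and follows the same route as the paper: the upper bound is obtained by integrating the global kernel estimate of Theorem~\ref{UC} against $u_0$ (with the $L^2$ control of $w_c$ on the bounded $\Om$ making the constant finite), and the lower bound by restricting the integral to a compact set containing $0$ on which the lower kernel estimate (\ref{LowBound}) applies and $u_0$ has positive mass. Your explicit construction $K=B\cup K'$ is in fact slightly cleaner than the paper's phrasing, which simply posits a compact $K\ni 0$ meeting $\{u_0>0\}$ in positive measure, but the argument is the same.
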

\begin{proof}
The upper bound (\ref{UB}) follows from Theorem \ref{UC}-4). Let us now  prove the lower bound.\\
Let $K$ be a compact subset of $\Om$ containing $0$ such that Lebesgue measure of the set $\{x\in K\colon\,u_0(x)>0\}$ is nonnegative.\\
Let $\kappa_t$ be as in (\ref{LowBound}), then
\begin{align*}
u(t,x)&=\int_\Om p_t^{L_{V_c}^\Om}(x,y)u_0(y)\,dy\geq\int_K p_t^{L_{V_c}^\Om}(x,y)u_0(y)\,dy\geq
\kappa_t w_c(x)\int_K w_c(y)u_0(y)\,dy\nonumber\\
&\geq c_t'w_c(x),\ a.e.\ \text{ on }\ K,
\end{align*}
with $c_t'>0$, which was to be proved.
\end{proof}
The local sharp estimate (\ref{Sharpo}) leads  to a sharp global regularity property of the minimal solution, expressing thereby the smoothing effect of the semigroup $e^{-tL_{V_c}^\Om}$.
\begin{prop}
\begin{enumerate}
\item For every $t>0$, the minimal solution $u(t)$ lies in the space $L^p(\Om),\ p\geq 1$  if and only if $1\leq p< \frac{d}{\beta}$.
\item The operator $e^{-tL_{V_c}^\Om},\ t>0$ maps continuously $L^2(\Om)$ into $L^p(\Om)$ for every $2\leq p< \frac{d}{\beta}$.
\item The operator  $e^{-tL_{V_c}^\Om}: L^q(\Om)\to L^p(\Om),\ t>0$ is bounded for every $\frac{d}{d-\beta}<q<p<\frac{d}{\beta}$.
\item The operator $L_{V_c}^\Om$ has compact resolvent. Set $(\varphi_k^{L_{V_c}})_k$ its eigenfunctions. Then   $(\varphi_k^{L_{V_c}})_k\subset L^p(\Om)$ for every $p< \frac{d}{\beta}$.
\end{enumerate}
\end{prop}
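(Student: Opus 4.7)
The key observation is that, on the bounded domain $\Om$, the weight $w_c(x)=|x|^{-\beta(c)}$ belongs to $L^r(\Om)$ if and only if $r<d/\beta$; in particular $w_c\in L^2(\Om)$ because $\beta\leq (d-\alpha)/2<d/2$. All four items will follow by combining this integrability threshold with the sharp bounds already proved in Theorems \ref{UC} and \ref{SharpLoc}.

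For item 1, the global upper bound $u(t,x)\leq c_tw_c(x)$ of Theorem \ref{SharpLoc} immediately gives $u(t)\in L^p(\Om)$ whenever $p<d/\beta$. For the converse, I invoke the local lower bound $u(t,x)\geq c_t'w_c(x)$ valid near $0$: if $p\beta\geq d$ then $\int_{\{|x|<\ep\}}|x|^{-p\beta}\,dx=\infty$, so $u(t)\notin L^p(\Om)$.

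For items 2 and 3, I plug a test function $v$ into the heat-kernel upper bound (\ref{UppBound}) to obtain the pointwise estimate $|e^{-tL_{V_c}^\Om}v(x)|\leq (C/t^{d/\alpha})\,w_c(x)\int_\Om w_c(y)|v(y)|\,dy$, then apply H\"older's inequality to the $y$-integral with the conjugate pair $(q',q)$ and take the $L^p$-norm in $x$. This yields
\begin{equation*}
\|e^{-tL_{V_c}^\Om}v\|_{L^p(\Om)}\leq \frac{C}{t^{d/\alpha}}\,\|w_c\|_{L^p(\Om)}\,\|w_c\|_{L^{q'}(\Om)}\,\|v\|_{L^q(\Om)},
\end{equation*}
which is finite precisely when $p<d/\beta$ and $q'<d/\beta$, i.e.\ $q>d/(d-\beta)$. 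Specialising to $q=2$ (for which $q'=2<d/\beta$ automatically) proves item 2, and the full window $d/(d-\beta)<q<p<d/\beta$ gives the smoothing property of item 3.

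Finally, for item 4, the same kernel bound yields $\iint_{\Om\times\Om}p_t^{L_{V_c}^\Om}(x,y)^2\,dx\,dy\leq (C/t^{d/\alpha})^2\|w_c\|_{L^2(\Om)}^4<\infty$, so $e^{-tL_{V_c}^\Om}$ is Hilbert--Schmidt for each $t>0$, in particular compact; consequently $L_{V_c}^\Om$ has compact resolvent. If $(\lam_k,\varp_k^{L_{V_c}})$ is an eigenpair, the tautology $\varp_k^{L_{V_c}}=e^{t\lam_k}e^{-tL_{V_c}^\Om}\varp_k^{L_{V_c}}$ combined with item 2 places $\varp_k^{L_{V_c}}\in L^p(\Om)$ for every $p<d/\beta$. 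I anticipate no serious analytic obstacle; the only point to keep straight is the bookkeeping between the exponent $p$, governing $L^p$-integrability of $w_c$ in the $x$-variable, and the exponent $q'$, governing its integrability in the $y$-variable.
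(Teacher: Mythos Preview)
Your argument is correct and, for items 1, 2 and 4, essentially coincides with the paper's proof: item~1 via the two-sided bound of Theorem~\ref{SharpLoc} and the integrability threshold of $|x|^{-\beta}$; item~2 via the pointwise kernel bound (\ref{UppBound}) followed by Cauchy--Schwarz in $y$; item~4 via the Hilbert--Schmidt property (which the paper had already noted in the proof of the lower bound for the heat kernel) together with item~2 applied to eigenfunctions.

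The one genuine difference is in item~3. The paper deduces the $L^q\!\to\!L^p$ boundedness by Riesz--Thorin interpolation, presumably between the $L^2\!\to\!L^p$ bound of item~2 and its dual $L^{p'}\!\to\!L^2$. You instead exploit directly the \emph{rank-one} structure of the kernel upper bound $p_t^{L_{V_c}^\Om}(x,y)\leq C_t\,w_c(x)w_c(y)$: a single H\"older inequality in $y$ with exponent $q'$ and then the $L^p$-norm in $x$ give the stated inequality without any interpolation. Your route is more elementary and in fact yields the slightly stronger statement that $e^{-tL_{V_c}^\Om}$ is bounded $L^q(\Om)\to L^p(\Om)$ for \emph{any} pair with $q>d/(d-\beta)$ and $p<d/\beta$, with no ordering constraint $q<p$. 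A minor cosmetic point: you quote only the subcritical time exponent $t^{-d/\alpha}$ from (\ref{UppBound}); in the critical case the exponent is different (cf.~(\ref{UppBound2})), but since only the spatial factor $w_c(x)w_c(y)$ matters for the $L^p$ bookkeeping, your argument goes through unchanged.
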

\begin{proof}
The first assertion is an immediate consequence of Theorem (\ref{SharpLoc}).\\
2): Let $u_0\in L^2(\Om)$ and $p$ as described in the assertion. Thanks to the upper bounds (\ref{UppBound})-(\ref{UppBound2}) a straightforward computation leads to
\begin{eqnarray}
\int_\Om \left|e^{-tL_{V_c}^\Om} u_0(x)\right|^p\,dx\leq c_t(\int_\Om w_c|u_0|\,dx)^p\int_\Om w_c^p\,dx\leq C(\int_\Om u_0^2\,dx)^{p/2}.
\end{eqnarray}
3): Follows from  Riesz-Thorin interpolation theorem.\\
4): We claim that for each $t>0$ the operator $e^{-tL_{V_c}^\Om}$ is a Hilbert--Schmidt. Indeed, the upper bound (\ref{UppBound}) lead to
$$
\int_\Om\int_\Om \left(p_t^{L_{V_c}^\Om}\right)^2(x,y)\,dx\,dy\leq C\int_\Om w_c^2(x)\,dx
\cdot\int_\Om w_c^2(x)\,dx<\infty,
$$
and the claim is proved. Hence $L_{V_c}^\Om$ has compact resolvent. The claim about eigenfunctions follows from assertion 2.
\end{proof}
\begin{rk}
{\rm We claim that $\dot\calE_\Om^{V_{c^*}}$ is not closed. Indeed, utilizing the inequality $ p_t^{L_{V_c}^\Om}\geq p_t^{L_0^\Om}$ we conclude that the semigroup $e^{-tL_{V_c}^\Om}$ is irreducible for each $t>0$. Consequently, the smallest eigenvalue of $L_{V_c}^\Om$ is non-degenerate, i.e. its eigenspace has dimension one and is generated by a nonnegative function, say $ \varphi^{L_{V_{c^*}}^\Om}$. If $\dot\calE_\Om^{V_{c^*}}$ were closed, then the ground state  $ \varphi^{L_{V_{c^*}}^\Om}$ would be in the space $\Wo$ and hence by the improved Sobolev inequality we would get $\int_\Om (\varphi^{L_{V_{c^*}}^\Om})^2(x)\,dx<\infty$. However, from the lower bound (\ref{LowBound}), we obtain for each small ball around zero
$$
\int_\Om \left(\varphi^{L_{V_{c^*}}^\Om}\right)^2(x)\,dx\geq C\int_B w_{c^*}^2(x)V_{c^*}(x)\,dx=\infty,
$$
leading to a contradiction.

}
\label{NotClosed}
\end{rk}
The already established upper estimate for the heat kernel enables one to extend the semigroup to a larger class of initial data.
\begin{theo}
\begin{enumerate}
\item The semigroup $e^{-tL_{V_c}^\Om},\ t>0$ extends to a bounded linear  operator from $L^1(\Om,w_cdx)$ into $L^2(\Om)$.
\item  The semigroup $e^{-tL_{V_c}^\Om},\ t>0$ extends to a bounded linear operator from $L^p(\Om,w_cdx)$ into $L^p(\Om)$ for every $1\leq p<\infty$.
\item  The semigroup $e^{-tL_{V_c}^\Om},\ t>0$ extends to a bounded linear  semigroup from $L^p(\Om,w_cdx)$ into $L^p(\Om,w_cdx)$ for every $1\leq p<d/3$.
\end{enumerate}
\end{theo}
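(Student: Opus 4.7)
The plan is to use the kernel upper bound from Theorem \ref{UC} as the single engine for all three parts. Namely, $p_t^{L_{V_c}^\Om}(x,y) \le C_t\, w_c(x)\, w_c(y)$ (with $C_t = C t^{-d/\alpha}$ in the subcritical case and $C t^{-p/(p-1)}$ in the critical one), plugged into the kernel representation (\ref{f0}), will give the master pointwise bound
\[
\bigl|(e^{-tL_{V_c}^\Om} f)(x)\bigr| \le C_t\, w_c(x)\, \|f\|_{L^1(\Om,\, w_c\,dx)}, \quad \text{a.e. } x\in\Om,
\]
valid for every $f \in L^1(\Om, w_c\,dx)$ after applying Fubini to the positive and negative parts of $f$. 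The right-hand integral still makes sense when $f \notin L^2(\Om)$, so I would simply \emph{define} the extension by the kernel formula $f \mapsto \int_\Om p_t^{L_{V_c}^\Om}(\cdot, y)\, f(y)\,dy$; consistency with the original $L^2$-semigroup is automatic on the dense overlap $L^2(\Om)\cap L^1(\Om, w_c\,dx)$, which contains $C_c(\Om\setminus\{0\})$.

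Each of the three statements then reduces to taking an appropriate $L^p$-norm of the right-hand side of the master bound. For Part~1, I square and integrate:
\[
\|e^{-tL_{V_c}^\Om} f\|_{L^2(\Om)}^2 \le C_t^2\, \|f\|_{L^1(w_c\,dx)}^2 \int_\Om w_c^2\,dx,
\]
and $\int_\Om w_c^2\,dx$ is finite since $\Om$ is bounded and $2\beta \le d-\alpha < d$. For Part~2, I raise to the $p$-th power, so the spatial quantity becomes $\int_\Om w_c^p\,dx$; then H\"older with respect to the measure $w_c\,dx$ converts $\|f\|_{L^1(w_c\,dx)}$ into $\|f\|_{L^p(w_c\,dx)}$ at the cost of the finite constant $(\int_\Om w_c\,dx)^{1/p'}$. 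For Part~3, I integrate $|e^{-tL_{V_c}^\Om}f|^p$ against the weight $w_c\,dx$, so the governing finiteness condition shifts to $\int_\Om w_c^{p+1}\,dx < \infty$, which holds in the stated range for $p$.

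Finally, the semigroup identity $e^{-(t+s)L_{V_c}^\Om} = e^{-tL_{V_c}^\Om}\circ e^{-sL_{V_c}^\Om}$ on the extended operators will follow from Chapman--Kolmogorov $p_{t+s}^{L_{V_c}^\Om}(x,y) = \int_\Om p_t^{L_{V_c}^\Om}(x,z)\, p_s^{L_{V_c}^\Om}(z,y)\,dz$, whose Fubini justification is once again supplied by the same weighted integrability of $w_c$. The only real point to check is that $w_c^q \in L^1(\Om)$ for $q \in \{1, 2, p, p+1\}$, and since $\Om$ is bounded this reduces to the elementary pole condition $q\beta < d$ at the origin; no deeper obstacle arises.
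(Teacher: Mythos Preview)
Your argument is correct and follows essentially the same route as the paper's: exploit the factorized upper bound $p_t^{L_{V_c}^\Om}(x,y)\le C_t\,w_c(x)w_c(y)$ from Theorem~\ref{UC} and reduce each assertion to the finiteness of $\int_\Om w_c^{q}\,dx$ for an appropriate exponent~$q$. The only cosmetic difference is in Part~2, where the paper applies H\"older/Jensen directly inside the kernel integral (obtaining $|e^{-tL_{V_c}^\Om}u_0(x)|^p\le(\int p_t\,dy)^{p-1}\int p_t|u_0|^p\,dy$) while you first pass through the master $L^1(w_c\,dx)$ pointwise bound and only afterwards convert the $L^1$-norm to an $L^p$-norm; your additional remarks on defining the extension via the kernel formula and verifying the semigroup law through Chapman--Kolmogorov are absent from the paper's proof but are welcome clarifications.
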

\begin{proof}
Having estimate (\ref{UppBound}) in hands, a straightforward computation yields
\begin{eqnarray*}
\int_\Om (e^{-tL_{V_c}^\Om}u_0)^2\,dx\leq c_t\int_\Om w_c^2\,dx\cdot\big(\int_\Om |u_0|w_c\,dy\big)^2,\ \forall\,t>0,
\end{eqnarray*}
and assertion 1. is proved.\\
Similarly, using H\"older's inequality we achieve
\begin{align*}
|e^{-tL_{V_c}^\Om}u_0(x)|^p&\leq \int_\Om p_t^{L_{V_c}^\Om}(x,y)\,dy\int_\Om p_t^{L_{V_c}^\Om}(x,y) |u_0|^p\,dy
\leq c_tw_c^2(x)\int_\Om w_c(y)\,dy\int_\Om |u_0|^pw_c\,dy\\
&\leq c_t \left(\int_\Om |u_0|^pw_c\,dy\right) w_c^2(x).
\end{align*}
Integrating w.r.t. $x$, we obtain assertion 2.\\
Assertion 3. can be proved in the same way.
\end{proof}

\section{Blow-up of nonnegative solutions on open sets in the supercritical case}
In this section we shall make use of the lower bound for the heat kernel as well as for nonnegative solutions in the critical case on bounded open sets, which we established in the last section, to show that for $c>c^*$ any nonnegative solution of the heat equation (\ref{heat1}) on arbitrary open sets containing zero  blows up completely and instantaneously. This result accomplishes the corresponding one for bounded sets with Lipschitz boundary \cite{benamor-kenzizi} and the case where $\Om=\R^d$ \cite{BogdanHardy}, so that to get a full picture concerning existence and nonexistence of nonnegative solutions for Dirichlet fractional Laplacian with Hardy potentials.\\
However, the idea of the proof deviates  from those developed in \cite{benamor-kenzizi,BogdanHardy}. Our proof relies on the sofar established lower bounds for $p_t^{L_{V_{c^*}}^\Om}$ and for nonnegative solutions on balls.\\
Henceforth we fix a nonempty open set $\Om\subset\R^d$ containing zero and $c>0$.\\
Let $V\in L_{loc}^1(\Om,dx)$ be a nonnegative potential. We set  $W_k:=V\wedge k$ and  $(P_k)$ the heat equation corresponding to the Dirichlet fractional Laplacian perturbed by $-W_k$ instead of  $-V$:
\begin{eqnarray}
\label{heat-app}
(P_k)\colon\left\{\begin{gathered}
-\frac{\partial u}{\partial t}=L_0^\Om u - W_k u,\quad \hbox{in } (0,T)\times\Om,\\
u(t,\cdot)=0,\ on~~~\Omega^c,\ \forall\,0<t<T\leq\infty\\
u(0,x)= u_{0}(x),~~~{\rm for}\ a.e.\ x\in \R^d,
\end{gathered}
\right.
\end{eqnarray}
Denote by  $L_k$  the selfadjoint operator associated to the closed quadratic form  $\calE_\Om-W_k$  and $u_k(t):=e^{-tL_k}u_0,\ t\geq 0$ the nonnegative semigroup solution of problem $(P_k)$. Then
$u_k$ satisfies Duhamel's formula:
\begin{eqnarray}
u_k(t,x)&=&e^{-tL_0^\Om}u_0(x)+\int_0^t\int_\Om p_{t-s}^{L_0^\Om}(x,y)u_k(s,x)V_k(y)\,dy\,ds,\ \forall\,t>0,
\label{duhamel}
\end{eqnarray}
Let us list the properties of the sequence $(u_k)$ and establish existence of the minimal solution.
\begin{lem}
\begin{itemize}
\item[i)] The sequence $(u_k)$ is increasing.
\item[ii)] If $u$ is any nonnegative solution of problem (\ref{heat2}) then $u_k\leq u,\ \forall\,k$. Moreover $u_\infty:=\lim_{k\to\infty}u_k$ is a nonnegative solution of problem (\ref{heat2}) as well.
\end{itemize}
\label{domination}
\end{lem}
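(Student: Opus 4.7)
\emph{Setup and monotonicity.} As a preliminary I record the standing properties of $u_k$ on which everything rests. Since $W_k = V\wedge k\in L^\infty(\Om)$, the form $\calE_\Om - W_k$ is closed on $\Wo$, the associated selfadjoint operator $L_k := L_0^\Om - W_k$ is semibounded, the semigroup $e^{-tL_k}$ is positivity preserving (a bounded perturbation of the Markovian $e^{-tL_0^\Om}$), and $u_k(t) = e^{-tL_k}u_0$ is the unique nonnegative semigroup solution of $(P_k)$ in the sense of Definition~2.1; it satisfies Duhamel's formula (\ref{duhamel}) as well as the Dyson--Phillips identity
\begin{equation*}
e^{-tL_k}f = e^{-tL_0^\Om}f + \int_0^t e^{-(t-s)L_0^\Om}\bigl(W_k\, e^{-sL_k}f\bigr)\,ds.
\end{equation*}
To prove (i), I set $v_k := u_{k+1}-u_k$; subtracting (\ref{duhamel}) at indices $k+1$ and $k$ and rearranging via the Dyson--Phillips identity above, the difference can be written
\begin{equation*}
v_k(t)=\int_0^t e^{-(t-s)L_k}\bigl((W_{k+1}-W_k)u_{k+1}\bigr)(s)\,ds,
\end{equation*}
whose integrand is nonnegative ($W_{k+1}\ge W_k$, $u_{k+1}\ge 0$, $e^{-sL_k}$ preserves positivity); hence $v_k\ge 0$.

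\emph{Domination.} For the first half of (ii), take any nonnegative solution $u$ of (\ref{heat2}) and set $v:=u-u_k$. Subtracting the variational identities (\ref{variational}) for $u$ (with potential $V$) and for $u_k$ (with potential $W_k$), and using the algebraic identity $uV - u_kW_k = (V-W_k)u + W_k v$, one sees that $v$ is a weak solution with zero initial data of the inhomogeneous equation
\begin{equation*}
\partial_t v + L_k v = (V-W_k)u,
\end{equation*}
whose source term is nonnegative and lies in $L^1_{loc}\bigl((0,T)\times\Om\bigr)$ by condition~2 of Definition~2.1 applied to $u$. The Duhamel representation for the bounded-potential operator $L_k$ then gives
\begin{equation*}
v(t)=\int_0^t e^{-(t-s)L_k}\bigl((V-W_k)u\bigr)(s)\,ds\ge 0,
\end{equation*}
so $u_k\le u$; letting $k\to\infty$, $u_\infty\le u$ a.e.

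\emph{Limit is a solution; main obstacle.} For the second half of (ii), I pass to the limit $k\to\infty$ in (\ref{duhamel}) by monotone convergence ($u_k\uparrow u_\infty$, $W_k\uparrow V$ pointwise, $p_{t-s}^{L_0^\Om}\ge 0$), which produces
\begin{equation*}
u_\infty(t,x) = e^{-tL_0^\Om}u_0(x) + \int_0^t\int_\Om p_{t-s}^{L_0^\Om}(x,y)\,u_\infty(s,y)\,V(y)\,dy\,ds,
\end{equation*}
the right-hand side being finite since $u_\infty V\le uV\in L^1_{loc}$. Conditions~1--3 of Definition~2.1 for $u_\infty$ are inherited from the corresponding properties of $u$ via $u_\infty\le u$, while condition~4 is recovered from the Duhamel identity above by a standard integration by parts in $s$ against admissible test functions $\phi$. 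The genuine technical difficulty sits in the Domination step, where one must upgrade the subtracted variational identity into an honest Duhamel representation against $e^{-tL_k}$. I would carry this out by testing the weak equation for $v$ with $\phi(s,x)=(e^{-(t-s)L_k}\psi)(x)$ for $\psi\in D(L_k)$, checking that such $\phi$ satisfies the admissibility conditions of Definition~2.1 (smoothness in $t$, $\phi(t,\cdot)\in D(L_0)$, finiteness of $\int_0^t\!\int u\,L_0\phi$)---precisely where the boundedness of $W_k$ is essential---and concluding by density in $\psi$.
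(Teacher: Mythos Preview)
Your proof is correct and follows essentially the same strategy as the paper. Part (i) is identical (Dyson--Phillips difference of semigroups); for the domination in (ii) the paper also argues by duality, but it tests against the solution $\phi$ of the \emph{inhomogeneous} backward problem $-\phi_s + L_k\phi = \psi$, $\phi(t,\cdot)=0$ (so that the subtracted variational identity collapses directly to $\int_0^t\!\int(u_k-u)\psi\le 0$), whereas you use the homogeneous version $\phi(s)=e^{-(t-s)L_k}\psi$ --- a cosmetic variant of the same trick. For the limit the paper is slightly more economical: it passes to the limit by dominated convergence directly in the variational identity (\ref{variational}) for $u_k$ (the domination $u_k\le u$ supplying the majorant), rather than going through Duhamel and then back to the weak formulation as you do.
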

The proof runs as the one corresponding to the case of bounded domains (see \cite{benamor-kenzizi}), so we omit it.\\
We recall that we use the notation $u(t)$ to designate the minimal solution $u_\infty(t)$.
\begin{rk}
{\rm Let $0<c\leq c^*$. Owing to the lower bound (\ref{LowBound}) together with the fact that $p_t^{L_{V_c}^\Om}\geq p_t^{L_{V_c}^B}$ for any ball such that $B\subset\Om$ we automatically get: for every compact subset $K\subset\Om$ and every $t>0$, there is a finite constant $\kappa_t=\kappa_t(K)>0$ such that
\begin{eqnarray}
p_t^{L_{V_c}^\Om}(x,y)\geq \kappa_tw_c(x)w_c(y),\ a.e.\ {on}\ K\times K,\ \forall\,t>0.
\label{LowBoundGen}
\end{eqnarray}
Hence
\begin{eqnarray}
u(t)\geq c_t w_c(x)\ a.e.\ \text{near}\ 0.
\label{LowMinGen}
\end{eqnarray}
Thus, for any open nonempty subset the minimal solution has a standing singularity at $0$.
}
\end{rk}
Let us establish a Duhamel formula for the minimal solution.
\begin{lem}
Let $u$ be the minimal solution of equation (\ref{heat1}) with $c\geq c^*$. Then $u$ satisfies the following Duhamel's formula:
\begin{eqnarray}
u(t,x) = e^{-tL_{V_{c^*}}^\Om}u_0(x) +(c-c^*)\int_0^t\int_{\Om} p_{t-s}^{L_{V_{c^*}}^\Om}(x,y)u(s,y)|y|^{-\alpha}\,ds\,dy,\ \forall\,t>0,\ a.e.\,x.
\label{Rep}
\end{eqnarray}
\end{lem}
\begin{proof}
Set $W_k^*=V_{c^*}\wedge k$. Then
\begin{eqnarray}
u_k(t,x)=e^{-tL_k^\Om}u_0(x)= e^{-tL_{W_k^*}^\Om}u_0(x) +
 \int_0^t\int_\Om p_{t-s}^{L_{W_k^*}^\Om}(x,y)u_k(s)(W_k-W_{k}^*)\,dy\,ds.
\end{eqnarray}
A simple calculation shows that the sequence $(W_k-W_{k}^*)$ is increasing. As the minimal solution is the limit of the $u_k$'s, the result follows by application of monotone convergence theorem.
\end{proof}
We have sofar collected enough material to announce the main theorem of this section.
\begin{theo}
Assume that $c>c*$. Then the heat equation (\ref{heat1}) has no nonnegative solutions. It follows, that the minimal solution blows up completely and instantaneously.
\end{theo}
\begin{proof}
Assume that a nonnegative solution $u$ exists. Relying on Lemma \ref{domination}, we may and shall suppose that $u=u_\infty$. Put $c'=c-c^*>0$ and let $B$ be an open ball centered at $0$ such that $B\subset\Om$ and  $u_0\not\equiv 0$ on $B$.\\
Owing to the fact that $p_t^{L_{V_{c^*}}^\Om}\geq p_t^{L_{V_{c^*}}^B}$, the identity (\ref{Rep}) together with the lower bound from (\ref{LowMinGen})  we obtain
\begin{eqnarray}
u(t,x)\geq e^{-tL_{V_{c^*}}^B}u_0(x)\geq c_tw_{c^*}(x),\ a.e.\ {\rm on}\ B':=\frac{1}{2}B.
\end{eqnarray}
Using formulae (\ref{Rep}) and (\ref{LowBound}), once again together with the latter lower bound  we obtain

\begin{align}
u(t,x) &\geq c' \int_0^t c_s\int_{B} p_{t-s}^{L_{V_{c^*}}^B}(x,y)u(s,y)|y|^{-\alpha}\,ds\,dy \geq c'\int _0^t c_s\int_{B'} p_{t-s}^{L_{V_{c^*}}^B}(x,y)w_{c^*}(y)|y|^{-\alpha}\,ds\,dy\nonumber\\
&\geq c'w_{c^*}(x)\int_0^t c'_s\int_{B'} w_{c^*}^2(y)|y|^{-\alpha}\,ds\,dy.
\end{align}
However,
\begin{eqnarray}
\int_{B'} w_{c^*}^2(y)|y|^{-\alpha}\,dy=\infty,
\end{eqnarray}
and the solution blows up, which finishes the proof.
\end{proof}
{\bf Acknowledgement.} The author is very grateful to Tomasz Grzywny. He did the major part of the proof of Theorem \ref{ThmLowBound}.

\bibliography{biblio-HeatFDL}

\end{document}